\newtheorem{thm}{\bf Theorem}
\newtheorem{example}{\bf Example}
\newtheorem{remark}{\bf Remark}
\begin{document}
\title{\bf A tighter $Z$-eigenvalue localization set for tensors and its applications}
\author{{Jianxing Zhao\footnote{Corresponding author. E-mail: zjx810204@163.com; zhaojianxing@gzmu.edu.cn (Jianxing Zhao)}}\\ [2mm]
{\small \textit{College of Data Science and Information Engineering, Guizhou Minzu University,}}\\
{\small \textit{Guiyang 550025, P.R. China}}\\
}

\date{}
\maketitle
{\bf Abstract.}
A new $Z$-eigenvalue localization set for tensors is given and proved to be tighter than those presented by 
Wang \emph{et al}. (Discrete and Continuous Dynamical Systems Series B 22(1): 187-198, 2017) and Zhao (J. 
Inequal. Appl., to appear, 2017). As an application, a sharper upper bound for the $Z$-spectral radius of weakly 
symmetric nonnegative tensors is obtained. Finally, numerical examples are given to verify the theoretical 
results.  \\[-3mm]

\noindent{\it  Keywords}: {$Z$-eigenvalue; localization set; nonnegative tensors; spectral radius; weakly symmetric}\\
\noindent{\it  AMS Subject Classification}: 15A18; 15A42; 15A69. \\
\thispagestyle{empty}

\section{Introduction}
For a positive integer $n$,~$n\geq 2$, $N$ denotes the set $\{1,2,\cdots,n\}$.
$\mathbb{C}$ ($\mathbb{R}$) denotes the set of all complex (real) numbers.
We call $\mathcal{A}=(a_{i_1i_2\cdots i_{m}})$ a real tensor of order $m$ dimension $n$, denoted by $\mathbb{R}^{[m,n]}$, if
$$a_{i_{1}i_2\cdots i_{m}}\in{\mathbb{R}},$$ where $i_{j}\in{N}$ for $j=1,2,\cdots,m$.
$\mathcal{A}$ is called nonnegative if $a_{i_{1}i_2\cdots i_{m}}\geq 0.$
$\mathcal{A}=(a_{i_{1}\cdots i_{m}})\in \mathbb{R}^{[m,n]}$ is called symmetric \cite{qi2005} if
\begin{eqnarray}
a_{i_{1}\cdots i_{m}}=a_{\pi(i_{1}\cdots i_{m})},\ \forall \pi\in\Pi_{m},\nonumber
\end{eqnarray}
where $\Pi_{m}$ is the permutation group of $m$ indices.
$\mathcal{A}=(a_{i_1i_2\cdots i_{m}})\in \mathbb{R}^{[m,n]}$ is called weakly symmetric \cite{changkc1} if the associated homogeneous polynomial
$$\mathcal{A}x^m=\sum\limits_{i_1,i_2,\cdots,i_m\in N}a_{i_{1}i_2\cdots i_{m}}x_{i_1}x_{i_2}\cdots x_{i_m}$$
satisfies $\nabla \mathcal{A}x^m=m\mathcal{A}x^{m-1}$.
It is shown in \cite{changkc1} that a symmetric tensor is necessarily weakly symmetric, but the converse
is not true in general.

Given a tensor $\mathcal {A}=(a_{i_1\cdots i_m})\in \mathbb{R}^{[m,n]}$, if there are $\lambda\in \mathbb{C}$ and
$x=(x_1,x_{2}\cdots,x_n)^T\in \mathbb{C}^n\backslash\{0\}$
such that
\[\mathcal {A}x^{m-1}=\lambda x~\textmd{and}~ x^Tx=1,\]
then $\lambda$ is called an $E$-eigenvalue of $\mathcal {A}$ and $x$ an
$E$-eigenvector of $\mathcal {A}$ associated with $\lambda$, where
$\mathcal {A}x^{m-1}$ is an $n$ dimension vector whose $i$th component is
\[(\mathcal
{A}x^{m-1})_i=\sum\limits_{i_2,\cdots,i_m\in N} a_{ii_2\cdots
i_m}x_{i_2}\cdots x_{i_m}.\]
If $\lambda$ and $x$ are all real, then $\lambda$ is called a $Z$-eigenvalue of $\mathcal
{A}$ and $x$ a $Z$-eigenvector of $\mathcal {A}$ associated with
$\lambda$; for details, see \cite{qi2005,lim}.
Here, we define the $Z$-spectrum of $\mathcal{A}$,
denoted $\sigma(\mathcal{A})$ to be the set of all $Z$-eigenvalues of $\mathcal{A}$. Assume $\sigma(\mathcal{A})\neq 0,$ then the $Z$-spectral radius \cite{changkc1} of $\mathcal{A}$, denoted $\varrho(\mathcal{A})$, is defined as
$$\varrho(\mathcal{A}):=\sup\{|\lambda|:\lambda\in \sigma(\mathcal{A})\}.$$

Recently, many people have focused on locating all $Z$-eigenvalues of tensors and
bounding the $Z$-spectral radius of nonnegative tensors in \cite{changkc1,wg,zjxjia,sys,liwen,hejunjcaa,hejunspringerplus,hjaml,lql}.
In 2017, Wang \emph{et al}. \cite{wg} established the following Ger$\breve{s}$gorin-type  $Z$-eigenvalue inclusion theorem for tensors.
\begin{thm}\emph{\cite[Theorem 3.1]{wg}}\label{wg-th1}
Let $\mathcal{A}=(a_{i_{1}\cdots i_{m}})\in{\mathbb{R}}^{[m,n]}$. Then
\begin{eqnarray*}
\sigma(\mathcal{A})\subseteq \mathcal{K}(\mathcal{A})=\bigcup\limits_{i\in{N}}\mathcal{K}_{i}(\mathcal{A}),
\end{eqnarray*}
where 
\begin{eqnarray*}
\mathcal{K}_{i}(\mathcal{A})=\{z\in{\mathbb{C}}:|z|\leq R_{i}(\mathcal{A})\},~R_{i}(\mathcal{A})=\sum\limits_{i_2,\cdots, i_m\in N}|a_{ii_{2}\cdots i_{m}}|.
\end{eqnarray*}
\end{thm}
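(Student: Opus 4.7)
The plan is to mimic the classical Ger\v{s}gorin argument, picking out the largest component of the eigenvector and bounding the other components by it, but with the added ingredient that the $Z$-eigenvector satisfies $x^{T}x=1$, which controls the absolute size of its entries.

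Concretely, I start from a $Z$-eigenpair $(\lambda,x)$ so that $\mathcal{A}x^{m-1}=\lambda x$ and $x^{T}x=1$, which forces $x\neq 0$. I then choose an index $t\in N$ with
\[
|x_{t}|=\max_{i\in N}|x_{i}|>0,
\]
and write out the $t$-th scalar equation
\[
\lambda x_{t}=\sum_{i_{2},\dots,i_{m}\in N}a_{ti_{2}\cdots i_{m}}x_{i_{2}}\cdots x_{i_{m}}.
\]
Taking absolute values, applying the triangle inequality, and using $|x_{i_{j}}|\leq|x_{t}|$ for every index gives
\[
|\lambda|\,|x_{t}|\leq\Bigl(\sum_{i_{2},\dots,i_{m}\in N}|a_{ti_{2}\cdots i_{m}}|\Bigr)|x_{t}|^{m-1}=R_{t}(\mathcal{A})\,|x_{t}|^{m-1},
\]
so after dividing by $|x_{t}|>0$ one obtains $|\lambda|\leq R_{t}(\mathcal{A})\,|x_{t}|^{m-2}$.

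The main step, and the place where the argument departs from a naive Ger\v{s}gorin transcription, is handling the leftover factor $|x_{t}|^{m-2}$. Here I invoke the normalization $x^{T}x=\sum_{i\in N}x_{i}^{2}=1$, which immediately yields $x_{t}^{2}\leq 1$ and hence $|x_{t}|\leq 1$. Since $m\geq 2$, this forces $|x_{t}|^{m-2}\leq 1$, and therefore $|\lambda|\leq R_{t}(\mathcal{A})$, i.e.\ $\lambda\in\mathcal{K}_{t}(\mathcal{A})\subseteq\bigcup_{i\in N}\mathcal{K}_{i}(\mathcal{A})$. Because $\lambda$ was an arbitrary element of $\sigma(\mathcal{A})$, this proves the desired inclusion.

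The only real obstacle I anticipate is the one just flagged: unlike the $H$-eigenvalue Ger\v{s}gorin theorem, where $(\mathcal{A}x^{m-1})_{t}=\lambda x_{t}^{m-1}$ cancels the $|x_{t}|^{m-1}$ cleanly, here the right-hand side is only $\lambda x_{t}$, so a leftover power of $|x_{t}|$ survives. Without the $Z$-normalization $x^{T}x=1$ there would be no way to discard it, and one would only recover an inequality of the form $|\lambda|\leq R_{t}(\mathcal{A})\|x\|_{\infty}^{m-2}$; the normalization is therefore essential to the statement. Everything else is routine triangle-inequality bookkeeping.
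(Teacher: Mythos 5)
Your proof is correct and follows essentially the same route the paper itself uses: the paper quotes this theorem from \cite{wg} without reproving it, but the identical mechanism (pick $t$ with $|x_t|=\max_i|x_i|$, bound every factor by $|x_t|$, and use the real normalization $x^Tx=1$ to get $0<|x_t|^{m-1}\leq|x_t|\leq 1$ and discard the leftover power) is exactly what the author deploys in the proof of Theorem~\ref{th1}. Your observation that the normalization is what rescues the argument, since the right-hand side is $\lambda x_t$ rather than $\lambda x_t^{m-1}$, is precisely the right point to flag.
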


To get a tighter $Z$-eigenvalue inclusion set than $\mathcal{K}(\mathcal{A})$,
Wang \emph{et al}. \cite{wg} gave the following Brauer-type $Z$-eigenvalue localization set for tensors.

\begin{thm}\emph{\cite[Theorem 3.2]{wg}}\label{wg-th2}
Let $\mathcal{A}=(a_{i_{1}\cdots i_{m}})\in{\mathbb{R}}^{[m,n]}$. Then
\begin{eqnarray*}
\sigma(\mathcal{A})\subseteq\mathcal{L}(\mathcal{A})
=\bigcup\limits_{i\in N}\bigcap\limits_{j\in N,j\neq i}\mathcal{L}_{i,j}(\mathcal{A}),
\end{eqnarray*}
where
\[
\mathcal{L}_{i,j}(\mathcal{A})=\left\{z\in{\mathbb{C}}:\big(|z|-(R_i(\mathcal{A})-|a_{ij\cdots j}|\big)|z|\leq |a_{ij\cdots j}|R_j(\mathcal{A})\right\}.
\]
\end{thm}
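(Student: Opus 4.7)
The plan is to mimic the classical Brauer proof, adapted to the multilinear eigenvalue equation by exploiting the normalization $x^{T}x=1$, which forces $|x_k|\le 1$ for every component of any $Z$-eigenvector.

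First I would fix a $Z$-eigenvalue $\lambda$ with associated $Z$-eigenvector $x$, and choose $i\in N$ so that $|x_i|=\max_{k\in N}|x_k|$. The aim is to show $\lambda\in\mathcal{L}_{i,j}(\mathcal{A})$ for every $j\neq i$, which places $\lambda$ in $\bigcap_{j\neq i}\mathcal{L}_{i,j}(\mathcal{A})\subseteq\mathcal{L}(\mathcal{A})$. To that end, pick an arbitrary $j\neq i$ and peel the $(j,j,\dots,j)$ entry off the $i$-th component of $\mathcal{A}x^{m-1}=\lambda x$:
\[
\lambda x_i \;=\; a_{ij\cdots j}\,x_j^{m-1} \;+\!\!\sum_{(i_2,\dots,i_m)\neq(j,\dots,j)}\!\! a_{ii_2\cdots i_m}\,x_{i_2}\cdots x_{i_m}.
\]
Using $|x_{i_k}|\le|x_i|$ for the leftover terms yields
\(
|\lambda||x_i|\le |a_{ij\cdots j}||x_j|^{m-1}+(R_i(\mathcal{A})-|a_{ij\cdots j}|)|x_i|^{m-1}.
\)

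The next step, and the key to getting out of the multilinear regime, is to apply the observation $|x_k|^{m-1}\le|x_k|$ (valid because $|x_k|\le 1$ and $m\ge 2$) on both sides of the inequality. This collapses the powers and gives
\(
\bigl(|\lambda|-(R_i(\mathcal{A})-|a_{ij\cdots j}|)\bigr)|x_i|\le |a_{ij\cdots j}||x_j|.
\)
In parallel, applying Theorem \ref{wg-th1}-style bounding to the $j$-th component equation $\lambda x_j=(\mathcal{A}x^{m-1})_j$ produces $|\lambda||x_j|\le R_j(\mathcal{A})|x_i|^{m-1}\le R_j(\mathcal{A})|x_i|$.

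Finally I would combine the two inequalities. Assume first $|\lambda|>0$ and $|\lambda|>R_i(\mathcal{A})-|a_{ij\cdots j}|$; otherwise the defining inequality of $\mathcal{L}_{i,j}(\mathcal{A})$ holds trivially because its left-hand side is non-positive. From the second inequality, substitute $|x_j|\le R_j(\mathcal{A})|x_i|/|\lambda|$ into the first, cancel the factor $|x_i|>0$ (which is positive since $x\neq 0$ and $|x_i|$ is the maximum modulus), and multiply through by $|\lambda|$ to obtain
\[
\bigl(|\lambda|-(R_i(\mathcal{A})-|a_{ij\cdots j}|)\bigr)|\lambda|\le |a_{ij\cdots j}|R_j(\mathcal{A}),
\]
which is exactly the condition $\lambda\in\mathcal{L}_{i,j}(\mathcal{A})$. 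Since $j\neq i$ was arbitrary, the theorem follows. The main obstacle is psychological rather than technical: one must recognize that the normalization $x^{T}x=1$ is what lets $|x_k|^{m-1}$ be replaced by $|x_k|$, since without this reduction the Brauer-style product inequality cannot be closed in the same form as in the matrix case.
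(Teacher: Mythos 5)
Your proposal is correct, and since the paper only quotes this theorem from Wang \emph{et al.} without reproducing its proof, the natural benchmark is the paper's own proof of Theorem \ref{th1}, which uses exactly the same strategy: select the maximal-modulus component $x_i$ of the (real, normalized) $Z$-eigenvector, split the $i$-th equation to isolate the $a_{ij\cdots j}$ term, exploit $|x_k|^{m-1}\le |x_k|\le 1$ coming from $x^Tx=1$ to collapse the multilinear powers, and close the Brauer-type product with the bound $|\lambda||x_j|\le R_j(\mathcal{A})|x_i|$. Your handling of the degenerate cases ($|\lambda|=0$ or $|\lambda|\le R_i(\mathcal{A})-|a_{ij\cdots j}|$) is also the standard one, so no gaps remain.
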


Very recently, Zhao \cite{zjxjia} presented another Brauer-type $Z$-eigenvalue localization set for tensors and proved that this set is tighter than those in Theorem \ref{wg-th1} and Theorem \ref{wg-th2}.

\begin{thm}\emph{\cite[Theorem 3]{zjxjia}}\label{zjxjia}
Let $\mathcal{A}=(a_{i_{1}\cdots i_{m}})\in{\mathbb{R}}^{[m,n]}$. Then
\begin{eqnarray*}
\sigma(\mathcal{A})\subseteq \Psi(\mathcal{A})=\bigcup\limits_{i\in N}\bigcap\limits_{j\in N, j\neq i}\Psi_{i,j}(\mathcal{A}),
\end{eqnarray*}
where
\begin{eqnarray*}
\Psi_{i,j}(\mathcal{A})=\left\{z\in \mathbb{C}:\big(|z|-r_i^{\overline{\Delta}_j}(\mathcal{A})\big)|z|\leq r_i^{\Delta_j}(\mathcal{A})R_j(\mathcal{A})\right\},
\end{eqnarray*}
\begin{eqnarray*}
r_i^{\Delta_j}(\mathcal{A})=\sum\limits_{j\in \{i_2,\cdots,i_m\}}|a_{ii_2\cdots i_m}|,
~~r_i^{\overline{\Delta}_j}(\mathcal{A})=\sum\limits_{j\notin\{i_2,\cdots,i_m\}}|a_{ii_2\cdots i_m}|.
\end{eqnarray*}
\end{thm}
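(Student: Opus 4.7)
The plan is to mimic the Brauer-type argument underlying Theorem \ref{wg-th2}, but with the finer splitting of row $i$ induced by the sets $\Delta_j$ and $\overline{\Delta}_j$. I would start by fixing an arbitrary $Z$-eigenpair $(\lambda,x)$ with $x^Tx=1$; the normalization forces $|x_k|\leq 1$ for every $k\in N$, and I would pick the dominant index $i\in N$ with $|x_i|=\max_{k\in N}|x_k|>0$.

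Next I would write out the $i$-th component of $\mathcal{A}x^{m-1}=\lambda x$, fix any $j\in N$ with $j\neq i$, and split the $(m-1)$-fold sum into the two disjoint parts labelled by whether $j$ appears in $(i_2,\ldots,i_m)$. On the $\overline{\Delta}_j$ part no factor is $x_j$, so each monomial is bounded by $|x_i|^{m-1}\leq |x_i|$ (using $|x_i|\leq 1$), contributing at most $r_i^{\overline{\Delta}_j}(\mathcal{A})|x_i|$. On the $\Delta_j$ part at least one factor is $x_j$; extracting it and bounding the remaining $m-2$ factors by $|x_i|\leq 1$ gives a contribution of at most $r_i^{\Delta_j}(\mathcal{A})|x_j|$. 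Rearranging yields the key inequality
\begin{eqnarray*}
\bigl(|\lambda|-r_i^{\overline{\Delta}_j}(\mathcal{A})\bigr)|x_i|\leq r_i^{\Delta_j}(\mathcal{A})|x_j|.
\end{eqnarray*}

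The coupling step uses the $j$-th eigenvalue equation to produce the companion bound $|\lambda||x_j|\leq R_j(\mathcal{A})\max_{k}|x_k|^{m-1}\leq R_j(\mathcal{A})|x_i|$. If $|\lambda|\leq r_i^{\overline{\Delta}_j}(\mathcal{A})$ then $\lambda\in\Psi_{i,j}(\mathcal{A})$ is automatic; otherwise the key inequality forces $|x_j|>0$, and multiplying the two estimates and cancelling the positive quantity $|x_i||x_j|$ gives
\begin{eqnarray*}
\bigl(|\lambda|-r_i^{\overline{\Delta}_j}(\mathcal{A})\bigr)|\lambda|\leq r_i^{\Delta_j}(\mathcal{A})R_j(\mathcal{A}),
\end{eqnarray*}
so $\lambda\in\Psi_{i,j}(\mathcal{A})$. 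Since $j\neq i$ was arbitrary, $\lambda\in\bigcap_{j\neq i}\Psi_{i,j}(\mathcal{A})\subseteq\Psi(\mathcal{A})$.

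The main technical point, and the place I would be most careful, is the bookkeeping in the $\Delta_j$ sum: the bound $|x_{i_2}\cdots x_{i_m}|\leq|x_j|$ only requires extracting one $x_j$ factor (any one; multiplicity is irrelevant) and then controlling the rest through $|x_i|\leq 1$. The case split on the sign of $|\lambda|-r_i^{\overline{\Delta}_j}(\mathcal{A})$ is a minor but necessary detail to justify the multiplication and cancellation in the coupling step; once these are dispatched, the proof is the standard two-row Brauer-type combination.
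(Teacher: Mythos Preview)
The present paper does not prove this theorem; it is quoted from \cite{zjxjia} without argument, so there is no in-paper proof to compare against directly. That said, your proposal is correct and is precisely the standard argument one expects (and presumably the one in \cite{zjxjia}): the key inequality $\bigl(|\lambda|-r_i^{\overline{\Delta}_j}(\mathcal{A})\bigr)|x_i|\leq r_i^{\Delta_j}(\mathcal{A})|x_j|$ you derive is exactly inequality (\ref{th1-equ2}) in the paper's proof of Theorem~\ref{th1}, obtained by the same splitting and the same use of $|x_i|^{m-1}\leq |x_i|$; your coupling step via the $j$-th row and the case split on the sign of $|\lambda|-r_i^{\overline{\Delta}_j}(\mathcal{A})$ are also handled the same way there. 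In short, your approach matches the machinery the paper itself relies on.
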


As we know, one can use eigenvalue inclusion sets to obtain the upper bound of the spectral radius of nonnegative tensors; for details, see \cite{wg,lcq-lyt,lcq-kx,lcq-zjj,lcq-cz}. Therefore, the main aim of this paper is to give a new $Z$-eigenvalue inclusion set for tensors and prove that the new set is tighter than those in
Theorems \ref{wg-th1}-\ref{zjxjia}. And as an application, a new upper bound for the $Z$-spectral radius of weakly symmetric nonnegative tensors is obtained and proved to be sharper than some existing upper bounds.

\section{Main results}\label{sec2}
In this section, we give a new Brauer-type $Z$-eigenvalue localization set for tensors, and establish the comparison between the new set with those in Theorems \ref{wg-th1}-\ref{zjxjia}.

\begin{thm}\label{th1}
Let $\mathcal{A}=(a_{i_{1}\cdots i_{m}})\in{\mathbb{R}}^{[m,n]}$. Then
\begin{eqnarray*}
\sigma(\mathcal{A})\subseteq \Omega(\mathcal{A})=
\left(\bigcup\limits_{i\in N}\bigcap\limits_{j\in N, j\neq i}\hat{\Omega}_{i,j}(\mathcal{A})\right)
\bigcup
\left(\bigcup\limits_{i\in N}\bigcap\limits_{j\in N,j\neq i}\Big(\tilde{\Omega}_{i,j}(\mathcal{A})\bigcap \mathcal{K}_i(\mathcal{A})\Big)\right),
\end{eqnarray*}
where
\begin{eqnarray*}
\hat{\Omega}_{i,j}(\mathcal{A})=\left\{z\in \mathbb{C}:
|z|< r_i^{\overline{\Delta}_j}(\mathcal{A}),|z|<r_j^{\Delta_j}(\mathcal{A})\right\}
\end{eqnarray*}
and
\begin{eqnarray*}
\tilde{\Omega}_{i,j}(\mathcal{A})=\left\{z\in \mathbb{C}:
\big(|z|-r_i^{\overline{\Delta}_j}(\mathcal{A})\big)\big(|z|-r_j^{\Delta_j}(\mathcal{A})\big)
\leq r_i^{\Delta_j}(\mathcal{A})r_j^{\overline{\Delta}_j}(\mathcal{A})
\right\}.
\end{eqnarray*}
\end{thm}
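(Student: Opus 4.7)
The strategy is to mimic the two-row technique used for Theorems \ref{wg-th2} and \ref{zjxjia}, but to refine it by applying the same ``does $j$ appear in the index tuple?'' split to \emph{both} rows, rather than relying on a Gershgorin bound for the $j$-th row as in Zhao's argument. This should tighten the Brauer-type estimate because both rows contribute their own diagonal/off-diagonal decomposition.

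Let $\lambda\in\sigma(\mathcal{A})$ with $Z$-eigenvector $x$, so $\mathcal{A}x^{m-1}=\lambda x$ and $x^{T}x=1$. Pick $t\in N$ with $|x_t|=\max_{k\in N}|x_k|$; since $x\neq 0$, $|x_t|>0$, and $x^{T}x=1$ forces $|x_t|\le 1$. A standard Gershgorin estimate applied to the $t$-th component of $\mathcal{A}x^{m-1}=\lambda x$, after bounding each $|x_{i_k}|$ by $|x_t|$ and using $|x_t|^{m-1}\le |x_t|$, gives $|\lambda|\le R_t(\mathcal{A})$, hence $\lambda\in\mathcal{K}_t(\mathcal{A})$. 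For each $j\in N\setminus\{t\}$ I then split the sum $\lambda x_t=\sum a_{t i_2\cdots i_m}x_{i_2}\cdots x_{i_m}$ according to whether $j\in\{i_2,\ldots,i_m\}$: the $j$-free piece is bounded by $r_t^{\overline{\Delta}_j}(\mathcal{A})|x_t|^{m-1}$ and the $j$-containing piece by $r_t^{\Delta_j}(\mathcal{A})|x_t|^{m-2}|x_j|$. Using $|x_t|^{m-1}\le|x_t|$ and $|x_t|^{m-2}\le 1$, this yields
\[
\bigl(|\lambda|-r_t^{\overline{\Delta}_j}(\mathcal{A})\bigr)|x_t|\le r_t^{\Delta_j}(\mathcal{A})|x_j|.
\]
Applying the same $j$-split to $\lambda x_j=\sum a_{j i_2\cdots i_m}x_{i_2}\cdots x_{i_m}$ gives
\[
\bigl(|\lambda|-r_j^{\Delta_j}(\mathcal{A})\bigr)|x_j|\le r_j^{\overline{\Delta}_j}(\mathcal{A})|x_t|.
\]

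To conclude, I run a sign-based case analysis on these two inequalities. When both bracketed factors are nonnegative and $|x_j|>0$, multiplying them and dividing by $|x_t||x_j|>0$ produces the defining inequality of $\tilde{\Omega}_{t,j}(\mathcal{A})$. If exactly one bracketed factor is negative, the product-type inequality holds automatically, since its left-hand side is nonpositive while its right-hand side is nonnegative, so again $\lambda\in\tilde{\Omega}_{t,j}(\mathcal{A})$. If both bracketed factors are strictly negative, then $|\lambda|<r_t^{\overline{\Delta}_j}(\mathcal{A})$ and $|\lambda|<r_j^{\Delta_j}(\mathcal{A})$, placing $\lambda$ in $\hat{\Omega}_{t,j}(\mathcal{A})$. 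The degenerate subcase $|x_j|=0$ is handled by noting that the first inequality then forces $|\lambda|\le r_t^{\overline{\Delta}_j}(\mathcal{A})$. Collecting these outcomes over all $j\neq t$ and combining with $\lambda\in\mathcal{K}_t(\mathcal{A})$ yields $\lambda\in\Omega(\mathcal{A})$.

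The step I expect to be most delicate is this final collection over $j$: the definition of $\Omega(\mathcal{A})$ takes a union over $i$ of \emph{intersections} over $j$, so landing in $\Omega(\mathcal{A})$ requires exhibiting a single index $i$ (I take $i=t$) for which the same branch, either $\hat{\Omega}_{i,j}$ throughout or $\tilde{\Omega}_{i,j}\cap\mathcal{K}_i$ throughout, works for every $j\neq i$ simultaneously. Verifying that the above sign dichotomy is consistent in this sense, so that one does not fall into $\hat{\Omega}_{t,j}$ for some $j$ and simultaneously outside $\tilde{\Omega}_{t,j'}$ for another $j'$, will be the main obstacle.
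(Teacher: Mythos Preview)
Your outline is essentially the paper's proof: the paper also chooses $t$ with $|x_t|$ maximal, derives the same two inequalities
\[
\bigl(|\lambda|-r_t^{\overline{\Delta}_j}(\mathcal{A})\bigr)|x_t|\le r_t^{\Delta_j}(\mathcal{A})|x_j|,
\qquad
\bigl(|\lambda|-r_j^{\Delta_j}(\mathcal{A})\bigr)|x_j|\le r_j^{\overline{\Delta}_j}(\mathcal{A})|x_t|,
\]
and then splits on the signs of the two bracketed quantities (with a separate treatment of $|x_j|=0$), exactly as you describe.

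The obstacle you flag in your final paragraph is genuine, and the paper does \emph{not} overcome it either. After each subcase the paper simply writes ``from the arbitrariness of $j$'' and concludes $\lambda\in\bigcap_{j\neq t}\hat\Omega_{t,j}$ or $\lambda\in\bigcap_{j\neq t}\tilde\Omega_{t,j}$, but which subcase applies depends on $j$, so all the argument actually yields is
\[
\lambda\in\mathcal{K}_t(\mathcal{A})\cap\bigcap_{j\neq t}\Bigl(\hat\Omega_{t,j}(\mathcal{A})\cup\tilde\Omega_{t,j}(\mathcal{A})\Bigr),
\]
and in general $\bigcap_j(A_j\cup B_j)\supsetneq(\bigcap_jA_j)\cup(\bigcap_jB_j)$. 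Concretely, with $a_j=r_t^{\overline{\Delta}_j}$, $b_j=r_j^{\Delta_j}$, $c_j=r_t^{\Delta_j}r_j^{\overline{\Delta}_j}$, the set $\tilde\Omega_{t,j}$ is the annulus $\rho^-_j\le|z|\le\rho^+_j$, and whenever $a_jb_j>c_j$ one has $\rho^-_j>0$. If for some $j_1$ one has $|\lambda|\ge\min(a_{j_1},b_{j_1})$ (so $\lambda\notin\hat\Omega_{t,j_1}$) while for another $j_2$ one has $|\lambda|<\rho^-_{j_2}$ (so $\lambda\notin\tilde\Omega_{t,j_2}$), then $\lambda$ lies in neither intersection; and when both brackets for $j_2$ are negative, the two row inequalities are vacuous and give no extra information to exclude this. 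So your diagnosis is correct: this is the real gap, and the paper's proof glosses over it rather than resolving it.
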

\begin{proof}
Let $\lambda$ be a $Z$-eigenvalue of $\mathcal{A}$ with corresponding $Z$-eigenvector $x=(x_{1},\cdots,x_{n})^{T}\in{\mathbb{C}}^{n}\backslash\{0\}$, i.e.,
\begin{eqnarray}\label{th1-equ1}
\mathcal{A}x^{m-1}=\lambda x,~\textmd{and}~||x||_2=1.
\end{eqnarray}
Let $|x_t|=\max\limits_{i \in N}|x_i|$. Obviously, $0<|x_t|^{m-1}\leq |x_t|\leq 1.$
For $\forall~j\in N, j\neq t$, from (\ref{th1-equ1}), we have
\begin{eqnarray*}
\lambda x_t=\sum\limits_{j\in \{i_2,\cdots, i_m\}}a_{ti_{2}\cdots i_{m}}x_{i_{2}}\cdots x_{i_{m}}
+\sum\limits_{j\notin \{i_2,\cdots, i_m\}}a_{ti_{2}\cdots i_{m}}x_{i_{2}}\cdots x_{i_{m}}.
\end{eqnarray*}
Taking modulus in the above equation and using the triangle inequality gives
\begin{eqnarray*}
|\lambda||x_t|&\leq& \sum\limits_{j\in \{i_2,\cdots, i_m\}}|a_{ti_{2}\cdots i_{m}}||x_{i_{2}}|\cdots |x_{i_{m}}|
+\sum\limits_{j\notin \{i_2,\cdots, i_m\}}|a_{ti_{2}\cdots i_{m}}||x_{i_{2}}|\cdots |x_{i_{m}}|\\
&\leq& \sum\limits_{j\in \{i_2,\cdots, i_m\}}|a_{ti_{2}\cdots i_{m}}||x_j|
+\sum\limits_{j\notin \{i_2,\cdots, i_m\}}|a_{ti_{2}\cdots i_{m}}||x_t|\\
&=&r_t^{\Delta_j}(\mathcal{A})|x_j|+r_t^{\overline{\Delta}_j}(\mathcal{A})|x_t|,
\end{eqnarray*}
i.e.,
\begin{eqnarray}\label{th1-equ2}
\big(|\lambda|-r_t^{\overline{\Delta}_j}(\mathcal{A})\big)|x_t|\leq r_t^{\Delta_j}(\mathcal{A})|x_j|.
\end{eqnarray}
If $|x_j|=0$, then $|\lambda|-r_t^{\overline{\Delta}_j}(\mathcal{A})\leq 0$ as $|x_t|>0$.
When $|z|-r_j^{\Delta_j}(\mathcal{A})\geq 0$, we have
\begin{eqnarray*}
\big(|\lambda|-r_t^{\overline{\Delta}_j}(\mathcal{A})\big)\big(|\lambda|-r_j^{\Delta_j}(\mathcal{A})\big)
\leq 0\leq r_t^{\Delta_j}(\mathcal{A})r_j^{\overline{\Delta}_j}(\mathcal{A}),
\end{eqnarray*}
which implies $\lambda\in \bigcap\limits_{j\in N,j\neq t}\tilde{\Omega}_{t,j}(\mathcal{A})\subseteq \Omega(\mathcal{A})$ from the arbitrariness of $j$.
When $|z|-r_j^{\Delta_j}(\mathcal{A})<0$, from the arbitrariness of $j$, we have $\lambda\in \bigcap\limits_{j\in N,j\neq t}\hat{\Omega}_{t,j}(\mathcal{A})\subseteq \Omega(\mathcal{A})$.

Otherwise, $|x_j|>0$. From (\ref{th1-equ1}), we can get
\begin{eqnarray*}
|\lambda||x_j|&\leq& \sum\limits_{j\in \{i_2,\cdots, i_m\}}|a_{ji_{2}\cdots i_{m}}||x_{i_{2}}|\cdots |x_{i_{m}}|
+\sum\limits_{j\notin \{i_2,\cdots, i_m\}}|a_{ji_{2}\cdots i_{m}}||x_{i_{2}}|\cdots |x_{i_{m}}|\\
&\leq& \sum\limits_{j\in \{i_2,\cdots, i_m\}}|a_{ji_{2}\cdots i_{m}}||x_j|
+\sum\limits_{j\notin \{i_2,\cdots, i_m\}}|a_{ji_{2}\cdots i_{m}}||x_t|\\
&=&r_j^{\Delta_j}(\mathcal{A})|x_j|+r_j^{\overline{\Delta}_j}(\mathcal{A})|x_t|,
\end{eqnarray*}
i.e.,
\begin{eqnarray}\label{th1-equ3}
\big(|\lambda|-r_j^{\Delta_j}(\mathcal{A})\big)|x_j|\leq r_j^{\overline{\Delta}_j}(\mathcal{A})|x_t|.
\end{eqnarray}
By (\ref{th1-equ2}), it is not difficult to see $\lambda\in \mathcal{K}_t(\mathcal{A})$.
When $|\lambda|-r_t^{\overline{\Delta}_j}(\mathcal{A})\geq 0$ or $|\lambda|-r_j^{\Delta_j}(\mathcal{A})\geq 0$ holds,
multiplying (\ref{th1-equ2}) with (\ref{th1-equ3}) and noting that $|x_t||x_j|>0$, we have
\begin{eqnarray*}
\big(|\lambda|-r_t^{\overline{\Delta}_j}(\mathcal{A})\big)\big(|\lambda|-r_j^{\Delta_j}(\mathcal{A})\big)
\leq r_t^{\Delta_j}(\mathcal{A})r_j^{\overline{\Delta}_j}(\mathcal{A}),
\end{eqnarray*}
which implies
$\lambda\in \bigcap\limits_{j\in N,j\neq t}\big(\tilde{\Omega}_{t,j}(\mathcal{A})\bigcap \mathcal{K}_t(\mathcal{A})\big)\subseteq \Omega(\mathcal{A})$
from the arbitrariness of $j$.
And when $|\lambda|-r_t^{\overline{\Delta}_j}(\mathcal{A})< 0$ and $|\lambda|-r_j^{\Delta_j}(\mathcal{A})< 0$ hold,
from the arbitrariness of $j$,
we have $\lambda\in \bigcap\limits_{j\in N,j\neq t}\hat{\Omega}_{i,j}(\mathcal{A})\subseteq \Omega(\mathcal{A})$.
Hence, the conclusion $\sigma(\mathcal{A})\subseteq \Omega(\mathcal{A})$ follows immediately from what we have proved.
\end{proof}

Next, a comparison theorem is given for Theorems \ref{wg-th1}-\ref{th1}.
\begin{thm}\label{th2}
Let $\mathcal{A}=(a_{i_{1}\cdots i_{m}})\in{\mathbb{R}}^{[m,n]}$. Then
\begin{eqnarray*}
\Omega(\mathcal{A})\subseteq\Psi(\mathcal{A})\subseteq \mathcal{L}(\mathcal{A})\subseteq \mathcal{K}(\mathcal{A}).
\end{eqnarray*}
\end{thm}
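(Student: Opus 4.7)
The chain contains three inclusions. The rightmost two, $\Psi(\mathcal{A})\subseteq \mathcal{L}(\mathcal{A})\subseteq \mathcal{K}(\mathcal{A})$, are already in the literature: $\Psi\subseteq\mathcal{L}$ is the comparison proved by Zhao immediately after Theorem~\ref{zjxjia} in~\cite{zjxjia}, and $\mathcal{L}\subseteq\mathcal{K}$ is the standard Brauer-type refinement over the Ger\v{s}gorin-type set from~\cite{wg}. So only the new inclusion $\Omega(\mathcal{A})\subseteq \Psi(\mathcal{A})$ requires work. Since $\Omega(\mathcal{A})$ is the union over $i\in N$ of intersections over $j\neq i$ of either $\hat{\Omega}_{i,j}(\mathcal{A})$ or $\tilde{\Omega}_{i,j}(\mathcal{A})\cap \mathcal{K}_i(\mathcal{A})$, while $\Psi(\mathcal{A})=\bigcup_{i}\bigcap_{j\neq i}\Psi_{i,j}(\mathcal{A})$, my plan is to reduce to a pointwise statement: for each fixed pair $(i,j)$ with $j\neq i$, I will prove the two inclusions
$$\hat{\Omega}_{i,j}(\mathcal{A})\subseteq \Psi_{i,j}(\mathcal{A}) \qquad\text{and}\qquad \tilde{\Omega}_{i,j}(\mathcal{A})\cap \mathcal{K}_i(\mathcal{A})\subseteq \Psi_{i,j}(\mathcal{A}).$$
Intersecting in $j$ and then taking union in $i$ will then deliver $\Omega\subseteq\Psi$.

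The first inclusion is essentially free: if $z\in \hat{\Omega}_{i,j}$ then $|z|<r_i^{\overline{\Delta}_j}(\mathcal{A})$, so the left-hand side of the inequality defining $\Psi_{i,j}$ is non-positive while its right-hand side is non-negative. The substantive step is the second inclusion. Using the decomposition $R_j(\mathcal{A})=r_j^{\Delta_j}(\mathcal{A})+r_j^{\overline{\Delta}_j}(\mathcal{A})$, I will expand and regroup the inequality defining $\tilde{\Omega}_{i,j}$ as
$$(|z|-r_i^{\overline{\Delta}_j})|z|\leq r_j^{\Delta_j}\bigl(|z|-r_i^{\overline{\Delta}_j}\bigr)+r_i^{\Delta_j} r_j^{\overline{\Delta}_j}.$$
The hypothesis $z\in \mathcal{K}_i$ says $|z|\leq R_i=r_i^{\Delta_j}+r_i^{\overline{\Delta}_j}$, hence $|z|-r_i^{\overline{\Delta}_j}\leq r_i^{\Delta_j}$. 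Since $r_j^{\Delta_j}\geq 0$, plugging this in gives $(|z|-r_i^{\overline{\Delta}_j})|z|\leq r_i^{\Delta_j}(r_j^{\Delta_j}+r_j^{\overline{\Delta}_j})=r_i^{\Delta_j}R_j$, i.e.\ $z\in \Psi_{i,j}$.

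The only delicate point is sign-tracking when $|z|-r_i^{\overline{\Delta}_j}$ is negative, but in that regime the desired conclusion is already trivial, so I foresee no real obstacle. The main conceptual content of the argument is that the intersection with $\mathcal{K}_i$ in the definition of $\Omega$ is exactly what supplies the bound $|z|-r_i^{\overline{\Delta}_j}\leq r_i^{\Delta_j}$ that makes the Brauer-type comparison between $\tilde{\Omega}_{i,j}$ and $\Psi_{i,j}$ go through.
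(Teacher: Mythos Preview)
Your proof is correct, and in fact it is considerably cleaner than the paper's own argument for the key inclusion $\tilde{\Omega}_{i,j}(\mathcal{A})\cap\mathcal{K}_i(\mathcal{A})\subseteq\Psi_{i,j}(\mathcal{A})$. The paper proceeds by a multi-layered case split: first on whether $r_i^{\Delta_j}(\mathcal{A})r_j^{\overline{\Delta}_j}(\mathcal{A})$ vanishes, then on whether $|z|\leq R_j(\mathcal{A})$ or $|z|>R_j(\mathcal{A})$, and in the latter subcase it invokes an external inequality (Lemma~2.3 of~\cite{lcq-lyt}) to compare $\tfrac{|z|}{R_j}$ with $\tfrac{|z|-r_j^{\Delta_j}}{r_j^{\overline{\Delta}_j}}$. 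Your single-line algebraic rearrangement
\[
(|z|-r_i^{\overline{\Delta}_j})|z|\leq r_j^{\Delta_j}(|z|-r_i^{\overline{\Delta}_j})+r_i^{\Delta_j}r_j^{\overline{\Delta}_j}
\]
followed by the bound $|z|-r_i^{\overline{\Delta}_j}\leq r_i^{\Delta_j}$ supplied by $\mathcal{K}_i$ bypasses all of this. The argument needs no sign discussion at all: you are upper-bounding the right-hand side of an established inequality via $r_j^{\Delta_j}\geq 0$, and this is valid regardless of the sign of $|z|-r_i^{\overline{\Delta}_j}$, so even your caveat about the negative case is unnecessary. What you gain is a self-contained, case-free proof; what the paper's approach offers is perhaps a clearer view of the boundary cases, but at the cost of length and an external reference.
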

\begin{proof}
From Theorem 5 in \cite{zjxjia}, we have $\Psi(\mathcal{A})\subseteq\mathcal{L}(\mathcal{A})\subseteq \mathcal{K}(\mathcal{A})$.
Hence, here only $\Omega(\mathcal{A})\subseteq\Psi(\mathcal{A})$ is proved.
Let $z\in \Omega(\mathcal{A})$. Then
$z\in \bigcup\limits_{i\in N}\bigcap\limits_{j\in N, j\neq i}\hat{\Omega}_{i,j}(\mathcal{A})$
or
$z\in \bigcup\limits_{i\in N}\bigcap\limits_{j\in N,j\neq i}\Big(\tilde{\Omega}_{i,j}(\mathcal{A})\bigcap \mathcal{K}_i(\mathcal{A})\Big).$
We next divide the proof into two cases.

Case I: If $z\in \bigcup\limits_{i\in N}\bigcap\limits_{j\in N, j\neq i}\hat{\Omega}_{i,j}(\mathcal{A}),$
then there is one index $i\in N$ such that
$|z|< r_i^{\overline{\Delta}_j}(\mathcal{A})$
and
$|z|<r_j^{\Delta_j}(\mathcal{A}), \forall~j\in N, j\neq i.$
Then, it is easy to see that
$$\big(|z|-r_i^{\overline{\Delta}_j}(\mathcal{A})\big)|z|\leq 0\leq r_i^{\Delta_j}(\mathcal{A})R_j(\mathcal{A}),~\forall~j\in N, j\neq i,$$
which implies that
$z\in \bigcap\limits_{j\in N, j\neq i}\Psi_{i,j}(\mathcal{A})\subseteq \Psi(\mathcal{A})$.
This implies $\Omega(\mathcal{A})\subseteq\Psi(\mathcal{A})$.

Case II: If $z\in \bigcup\limits_{i\in N}\bigcap\limits_{j\in N,j\neq i}\Big(\tilde{\Omega}_{i,j}(\mathcal{A})\bigcap \mathcal{K}_i(\mathcal{A})\Big),$
then there is one index $i\in N$, for any $j\in N, j\neq i$, such that
\begin{eqnarray}\label{th2-equ1}
|z|\leq R_i(\mathcal{A}),
\end{eqnarray}
and
\begin{eqnarray}\label{th2-equ2}
\big(|z|-r_i^{\overline{\Delta}_j}(\mathcal{A})\big)\big(|z|-r_j^{\Delta_j}(\mathcal{A})\big)
\leq r_i^{\Delta_j}(\mathcal{A})r_j^{\overline{\Delta}_j}(\mathcal{A}).
\end{eqnarray}

(i) If $r_i^{\Delta_j}(\mathcal{A})r_j^{\overline{\Delta}_j}(\mathcal{A})=0$,
then $|z|\leq r_i^{\overline{\Delta}_j}(\mathcal{A})$ or $|z|\leq r_j^{\Delta_j}(\mathcal{A}).$
When $|z|\leq r_i^{\overline{\Delta}_j}(\mathcal{A})$, we have
\begin{eqnarray*}
\big(|z|-r_i^{\overline{\Delta}_j}(\mathcal{A})\big)|z|\leq 0\leq r_i^{\Delta_j}(\mathcal{A})R_j(\mathcal{A}),
\end{eqnarray*}
which implies that
$z\in \bigcap\limits_{j\in N, j\neq i}\Psi_{i,j}(\mathcal{A})\subseteq \Psi(\mathcal{A})$
from the arbitrariness of $j$.
When $|z|\leq r_j^{\Delta_j}(\mathcal{A}),$ we have
\begin{eqnarray}\label{th2-equ3}
|z|\leq R_j(\mathcal{A}).
\end{eqnarray}
From (\ref{th2-equ1}), we can get
\begin{eqnarray}\label{th2-equ4}
|z|-r_i^{\overline{\Delta}_j}(\mathcal{A})\leq r_i^{\Delta_j}(\mathcal{A}).
\end{eqnarray}
Multiplying (\ref{th2-equ3}) and (\ref{th2-equ4}), we have
\begin{eqnarray}\label{th2-equ5}
\big(|z|-r_i^{\overline{\Delta}_j}(\mathcal{A})\big)|z|
\leq r_i^{\Delta_j}(\mathcal{A})R_j(\mathcal{A}),
\end{eqnarray}
which also implies that $z\in \bigcap\limits_{j\in N, j\neq i}\Psi_{i,j}(\mathcal{A})\subseteq \Psi(\mathcal{A})$, consequently,
$\Omega(\mathcal{A})\subseteq \Psi(\mathcal{A}).$

(ii) If $r_i^{\Delta_j}(\mathcal{A})r_j^{\overline{\Delta}_j}(\mathcal{A})>0$,
then dividing both sides by $r_i^{\Delta_j}(\mathcal{A})r_j^{\overline{\Delta}_j}(\mathcal{A})$ in (\ref{th2-equ2}), we have
\begin{eqnarray}\label{th2-equ6}
\frac{|z|-r_i^{\overline{\Delta}_j}(\mathcal{A})}{r_i^{\Delta_j}(\mathcal{A})}
\frac{|z|-r_j^{\Delta_j}(\mathcal{A})}{r_j^{\overline{\Delta}_j}(\mathcal{A})}\leq 1.
\end{eqnarray}
From (\ref{th2-equ1}), we can get (\ref{th2-equ4}) and furthermore $\frac{|z|-r_i^{\overline{\Delta}_j}(\mathcal{A})}{r_i^{\Delta_j}(\mathcal{A})}\leq 1.$
When $\frac{|z|-r_j^{\Delta_j}(\mathcal{A})}{r_j^{\overline{\Delta}_j}(\mathcal{A})}\leq 1$, then (\ref{th2-equ3}) holds.
Multiplying (\ref{th2-equ3}) and (\ref{th2-equ4}), we can get (\ref{th2-equ5}),
which implies that $z\in \bigcap\limits_{j\in N, j\neq i}\Psi_{i,j}(\mathcal{A})\subseteq \Psi(\mathcal{A})$, consequently,
$\Omega(\mathcal{A})\subseteq \Psi(\mathcal{A}).$

And when $\frac{|z|-r_j^{\Delta_j}(\mathcal{A})}{r_j^{\overline{\Delta}_j}(\mathcal{A})}>1$, we can obtain $|z|>R_j(\mathcal{A}).$
Let $a=|z|,b=r_j^{\Delta_j}(\mathcal{A})-|a_{jj\cdots j}|,c=|a_{jj\cdots j}|$ and $d=r_j^{\overline{\Delta}_j}(\mathcal{A})$.
By Lemma 2.3 in \cite{lcq-lyt}, we have
\begin{eqnarray}\label{th2-equ7}
\frac{|z|}{R_j(\mathcal{A})}=\frac{a}{b+c+d}\leq\frac{a-(b+c)}{d}=\frac{|z|-r_j^{\Delta_j}(\mathcal{A})}{r_j^{\overline{\Delta}_j}(\mathcal{A})}.
\end{eqnarray}
If $|z|> r_i^{\overline{\Delta}_j}(\mathcal{A})$, by (\ref{th2-equ6}) and (\ref{th2-equ7}), we have
\begin{eqnarray*}
\frac{|z|-r_i^{\overline{\Delta}_j}(\mathcal{A})}{r_i^{\Delta_j}(\mathcal{A})}
\frac{|z|}{R_j(\mathcal{A})}\leq
\frac{|z|-r_i^{\overline{\Delta}_j}(\mathcal{A})}{r_i^{\Delta_j}(\mathcal{A})}
\frac{|z|-r_j^{\Delta_j}(\mathcal{A})}{r_j^{\overline{\Delta}_j}(\mathcal{A})}\leq 1,
\end{eqnarray*}
equivalently,
\begin{eqnarray*}
\big(|z|-r_i^{\overline{\Delta}_j}(\mathcal{A})\big)|z|
\leq r_i^{\Delta_j}(\mathcal{A})R_j(\mathcal{A}),
\end{eqnarray*}
which implies that $z\in \bigcap\limits_{j\in N, j\neq i}\Psi_{i,j}(\mathcal{A})\subseteq \Psi(\mathcal{A})$ from the arbitrariness of $j$.
If $|z|\leq r_i^{\overline{\Delta}_j}(\mathcal{A})$, we have
\begin{eqnarray*}
\big(|z|-r_i^{\overline{\Delta}_j}(\mathcal{A})\big)|z|\leq 0\leq r_i^{\Delta_j}(\mathcal{A})R_j(\mathcal{A}).
\end{eqnarray*}
This also leads to
$z\in \bigcap\limits_{j\in N, j\neq i}\Psi_{i,j}(\mathcal{A})\subseteq \Psi(\mathcal{A})$,
consequently,
$\Omega(\mathcal{A})\subseteq \Psi(\mathcal{A}).$
The conclusion follows from Case I and Case II.
\end{proof}

\begin{remark}\label{remark1}\emph{
Theorem \ref{th2} shows that the set $\Omega(\mathcal{A})$ in Theorem \ref{th1} is tighter than $\mathcal{K}(\mathcal{A})$ in Theorem \ref{wg-th1},
$\mathcal{L}(\mathcal{A})$ in Theorem \ref{wg-th2} and $\Psi(\mathcal{A})$ in Theorem \ref{zjxjia}, that is,
$\Omega(\mathcal{A})$ can capture all $Z$-eigenvalues of $\mathcal{A}$ more precisely than $\mathcal{K}(\mathcal{A})$, $\mathcal{L}(\mathcal{A})$
and $\Psi(\mathcal{A})$.}
\end{remark}

Now, an example is given to verify the fact in Remark \ref{remark1}.

\begin{example}\label{eg1}
Let $\mathcal{A}=(a_{ijkl})\in{\mathbb{R}}^{[4,2]}$ be a symmetric tensor defined by
$$a_{1111}=1,~a_{1112}=1,a_{1122}=0.25,~a_{2222}=5,~and~a_{ijkl}=0~elsewhere.$$
By computation, we get that all the $Z$-eigenvalues of $\mathcal{A}$ are $-0.2044,-0.2044,5.0000$ and $5.0000$.
By Theorem \ref{wg-th1}, we have
\begin{eqnarray*}
\mathcal{K}(\mathcal{A})=\{z\in{\mathbb{C}}:|z|\leq 6.7500\}.
\end{eqnarray*}
By Theorem \ref{wg-th2}, we have
\begin{eqnarray*}
\mathcal{L}(\mathcal{A})=\{z\in{\mathbb{C}}:|z|\leq 6.4827\}.
\end{eqnarray*}
By Theorem \ref{zjxjia}, we have
\begin{eqnarray*}
\Psi(\mathcal{A})=\{z\in{\mathbb{C}}:|z|\leq 6.3161\}.
\end{eqnarray*}
By Theorem \ref{th1}, we have
\begin{eqnarray*}
\Omega(\mathcal{A})=\{z\in{\mathbb{C}}:|z|\leq 5.0000\}.
\end{eqnarray*}
The $Z$-eigenvalue inclusion sets $\mathcal{K}(\mathcal{A})$, $\mathcal{L}(\mathcal{A})$, $\Psi(\mathcal{A})$, $\Omega(\mathcal{A})$
and the exact $Z$-eigenvalues are drawn in Figure 1, where
$\mathcal{K}(\mathcal{A})$, $\mathcal{L}(\mathcal{A})$, $\Psi(\mathcal{A})$ and $\Omega(\mathcal{A})$ are represented by black dashed boundary, green solid boundary, blue point line boundary and red solid boundary, respectively.
The exact eigenvalues are plotted by black ``$+$".
It is easy to see $\sigma(\mathcal{A})\subseteq \Omega(\mathcal{A})\subset \Psi(\mathcal{A})\subset \mathcal{L}(\mathcal{A})\subset \mathcal{K}(\mathcal{A})$,
that is,
$\Omega(\mathcal{A})$ can capture all $Z$-eigenvalues of $\mathcal{A}$ more precisely than $\Psi(\mathcal{A})$, $\mathcal{L}(\mathcal{A})$ and $\mathcal{K}(\mathcal{A})$.
\begin{figure}[!ht]
\centerline{\includegraphics[height=10cm,width=20cm]{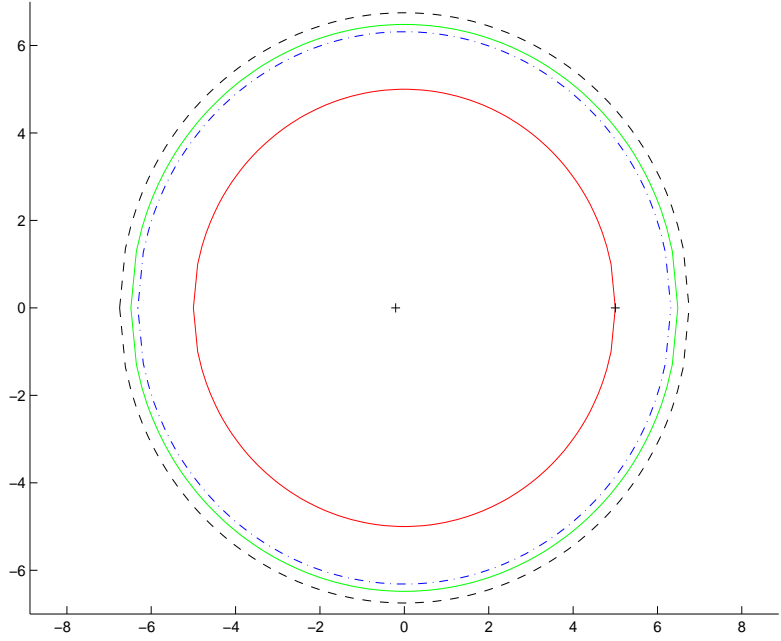}}
{\caption{Comparisons of $\mathcal{K}(\mathcal{A})$, $\mathcal{L}(\mathcal{A})$, $\Psi(\mathcal{A})$ and $\Omega(\mathcal{A})$.}}
\label{Fig1}
\end{figure}
\end{example}


\section{A sharper upper bound for the $Z$-spectral radius of weakly symmetric nonnegative tensors}
As the $Z$-spectral radius of weakly symmetric nonnegative tensors plays a fundamental role in the symmetric best rank-one approximation \cite{lql,ko},
recently, many people focus on bounding the $Z$-spectral radius of weakly symmetric nonnegative tensors.
As an application of the set in Theorem \ref{th1}, we in this section give a sharper upper bound for the $Z$-spectral radius of weakly symmetric nonnegative tensors.

\begin{thm}\label{th3}
Let $\mathcal{A}=(a_{i_{1}\cdots i_{m}})\in{\mathbb{R}}^{[m,n]}$ be a weakly symmetric nonnegative tensor. Then
\begin{eqnarray*}
\varrho(\mathcal{A})\leq \Omega_{max}(\mathcal{A})=\max\big\{\hat{\Omega}_{max}(\mathcal{A}), \tilde{\Omega}_{max}(\mathcal{A})\big\},
\end{eqnarray*}
where
\begin{eqnarray*}
&&\hat{\Omega}_{max}(\mathcal{A})=\max\limits_{i\in N}\min\limits_{j\in N, j\neq i}\min\big\{r_i^{\overline{\Delta}_j}(\mathcal{A}),r_j^{\Delta_j}(\mathcal{A})\big\},\\
&&\tilde{\Omega}_{max}(\mathcal{A})=\max\limits_{i\in N}\min\limits_{j\in N,j\neq i}\min\left\{R_i(\mathcal{A}),\bar{\Omega}_{i,j}(\mathcal{A})\right\},
\end{eqnarray*}
and
\begin{eqnarray*}
\bar{\Omega}_{i,j}(\mathcal{A})=\frac{1}{2}\left\{
r_i^{\overline{\Delta}_j}(\mathcal{A})+r_j^{\Delta_j}(\mathcal{A})+
\sqrt{\big(r_i^{\overline{\Delta}_j}(\mathcal{A})-r_j^{\Delta_j}(\mathcal{A})\big)^2+4r_i^{\Delta_j}(\mathcal{A})r_j^{\overline{\Delta}_j}(\mathcal{A})}
\right\}.
\end{eqnarray*}
\end{thm}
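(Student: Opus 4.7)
The plan is to reduce Theorem \ref{th3} to Theorem \ref{th1} by invoking the Perron--Frobenius type property for weakly symmetric nonnegative tensors (due to Chang, Pearson and Zhang), which ensures that the $Z$-spectral radius $\varrho(\mathcal{A})$ is itself attained as a $Z$-eigenvalue of $\mathcal{A}$. Hence $\varrho(\mathcal{A})\in\sigma(\mathcal{A})\subseteq\Omega(\mathcal{A})$, and the problem reduces to showing that every nonnegative real point of $\Omega(\mathcal{A})$ is bounded above by $\Omega_{max}(\mathcal{A})$. Since $\Omega(\mathcal{A})$ is the union of two pieces, I would split the argument accordingly.

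In the first case, $\varrho(\mathcal{A})\in\bigcup_{i\in N}\bigcap_{j\neq i}\hat{\Omega}_{i,j}(\mathcal{A})$. By the definition of $\hat{\Omega}_{i,j}$ there is some index $i$ such that for every $j\neq i$ both $\varrho(\mathcal{A})<r_i^{\overline{\Delta}_j}(\mathcal{A})$ and $\varrho(\mathcal{A})<r_j^{\Delta_j}(\mathcal{A})$. Taking the smaller of the two right-hand sides for each $j$, then minimizing over $j\neq i$ and maximizing over $i$, immediately gives $\varrho(\mathcal{A})\leq\hat{\Omega}_{max}(\mathcal{A})\leq\Omega_{max}(\mathcal{A})$.

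In the second case, $\varrho(\mathcal{A})\in\bigcup_{i\in N}\bigcap_{j\neq i}\bigl(\tilde{\Omega}_{i,j}(\mathcal{A})\cap\mathcal{K}_i(\mathcal{A})\bigr)$, so there is some $i$ for which, for every $j\neq i$,
\[
\varrho(\mathcal{A})\leq R_i(\mathcal{A})\qquad\text{and}\qquad\bigl(\varrho(\mathcal{A})-r_i^{\overline{\Delta}_j}(\mathcal{A})\bigr)\bigl(\varrho(\mathcal{A})-r_j^{\Delta_j}(\mathcal{A})\bigr)\leq r_i^{\Delta_j}(\mathcal{A})\,r_j^{\overline{\Delta}_j}(\mathcal{A}).
\]
Viewing the second inequality as a quadratic in $\varrho(\mathcal{A})$ and solving for its larger root yields the closed form $\varrho(\mathcal{A})\leq\bar{\Omega}_{i,j}(\mathcal{A})$. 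Combined with $\varrho(\mathcal{A})\leq R_i(\mathcal{A})$, and then taking $\min_{j\neq i}$ followed by $\max_{i\in N}$, this delivers $\varrho(\mathcal{A})\leq\tilde{\Omega}_{max}(\mathcal{A})\leq\Omega_{max}(\mathcal{A})$.

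The main subtlety I anticipate is the quadratic step. One must justify keeping the larger of the two roots $\tfrac12\bigl(r_i^{\overline{\Delta}_j}+r_j^{\Delta_j}\pm\sqrt{(r_i^{\overline{\Delta}_j}-r_j^{\Delta_j})^{2}+4r_i^{\Delta_j}r_j^{\overline{\Delta}_j}}\bigr)$ in order to obtain a valid upper bound, and also handle the degenerate situation in which both factors on the left-hand side of the quadratic inequality are already negative (so the inequality is vacuous and the bound on $\varrho(\mathcal{A})$ has to be extracted from the $\mathcal{K}_i(\mathcal{A})$ membership alone). The appeal to the Perron--Frobenius result at the very beginning is what licences treating $\varrho(\mathcal{A})$ as a genuine real $Z$-eigenvalue lying inside $\Omega(\mathcal{A})$, rather than merely as a supremum of moduli over complex points of the localization set.
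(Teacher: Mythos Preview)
Your proposal is correct and follows essentially the same route as the paper: invoke the Perron--Frobenius type fact (stated in the paper as Lemma~4.4 of \cite{wg}) that $\varrho(\mathcal{A})$ is a $Z$-eigenvalue, apply Theorem~\ref{th1} to place it in $\Omega(\mathcal{A})$, and then treat the two pieces of $\Omega(\mathcal{A})$ exactly as you describe. Your worry about the ``degenerate'' situation in the second case is unnecessary: membership in $\tilde{\Omega}_{i,j}(\mathcal{A})$ means precisely that $\varrho(\mathcal{A})$ lies in the interval between the two roots of the associated quadratic, so $\varrho(\mathcal{A})\leq\bar{\Omega}_{i,j}(\mathcal{A})$ holds automatically without any case split.
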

\begin{proof}
By Lemma 4.4 in \cite{wg}, we know that $\varrho(\mathcal{A})$ is a $Z$-eigenvalue of $\mathcal{A}$.
By Theorem \ref{th1}, we have
$$\varrho(\mathcal{A})\in \bigcup\limits_{i\in N}\bigcap\limits_{j\in N, j\neq i}\hat{\Omega}_{i,j}(\mathcal{A})
~\textmd{or}~
\varrho(\mathcal{A})\in\bigcup\limits_{i\in N}\bigcap\limits_{j\in N, j\neq i}\Big(\tilde{\Omega}_{i,j}(\mathcal{A})\bigcap \mathcal{K}_i(\mathcal{A})\Big).$$
If $\varrho(\mathcal{A})\in \bigcup\limits_{i\in N}\bigcap\limits_{j\in N, j\neq i}\hat{\Omega}_{i,j}(\mathcal{A}),$
then there is one index $i\in N$ such that
$$\varrho(\mathcal{A})< r_i^{\overline{\Delta}_j}(\mathcal{A})
~\textmd{and}~
\varrho(\mathcal{A})<r_j^{\Delta_j}(\mathcal{A}), \forall~j\in N, j\neq i.$$
Then we have
$\varrho(\mathcal{A})\leq \min\limits_{j\in N, j\neq i}\min\big\{r_i^{\overline{\Delta}_j}(\mathcal{A}),r_j^{\Delta_j}(\mathcal{A})\big\}.$
Furthermore, we have
\begin{eqnarray*}
\varrho(\mathcal{A})\leq \max\limits_{i\in N}\min\limits_{j\in N, j\neq i}\min\big\{r_i^{\overline{\Delta}_j}(\mathcal{A}),r_j^{\Delta_j}(\mathcal{A})\big\}.
\end{eqnarray*}

If
$\varrho(\mathcal{A})\in \bigcup\limits_{i\in N}\bigcap\limits_{j\in N,j\neq i}\Big(\tilde{\Omega}_{i,j}(\mathcal{A})\bigcap \mathcal{K}_i(\mathcal{A})\Big),$
then there is one index $i\in N$, for any $j\in N, j\neq i$, such that
\begin{eqnarray}\label{th3-equ1}
\varrho(\mathcal{A})\leq R_i(\mathcal{A})
\end{eqnarray}
and
\begin{eqnarray*}
\big(\varrho(\mathcal{A})-r_i^{\overline{\Delta}_j}(\mathcal{A})\big)\big(\varrho(\mathcal{A})-r_j^{\Delta_j}(\mathcal{A})\big)
\leq r_i^{\Delta_j}(\mathcal{A})r_j^{\overline{\Delta}_j}(\mathcal{A}).
\end{eqnarray*}
Solving $\varrho(\mathcal{A})$ in above inequality gives
\begin{eqnarray}\label{th3-equ2}
\varrho(\mathcal{A})\leq\frac{1}{2}\left\{
r_i^{\overline{\Delta}_j}(\mathcal{A})+r_j^{\Delta_j}(\mathcal{A})+
\sqrt{\big(r_i^{\overline{\Delta}_j}(\mathcal{A})-r_j^{\Delta_j}(\mathcal{A})\big)^2+4r_i^{\Delta_j}(\mathcal{A})r_j^{\overline{\Delta}_j}(\mathcal{A})}
\right\}=\bar{\Omega}_{i,j}(\mathcal{A}).
\end{eqnarray}
Combining (\ref{th3-equ1}) and (\ref{th3-equ2}), and by the arbitrariness of $j$, we have
\begin{eqnarray*}
\varrho(\mathcal{A})\leq\min\limits_{j\in N, j\neq i}\min\left\{R_i(\mathcal{A}),\bar{\Omega}_{i,j}(\mathcal{A})\right\}
\leq\max\limits_{i\in N}\min\limits_{j\in N,j\neq i}\min\left\{R_i(\mathcal{A}),\bar{\Omega}_{i,j}(\mathcal{A})\right\}.
\end{eqnarray*}
The conclusion follows from what we have proved.
\end{proof}

By Corollary 4.1 of \cite{wg}, Theorem 6 of \cite{zjxjia} and Theorem \ref{th2}, the following comparison theorem can be derived easily.

\begin{thm}\label{th4}
Let $\mathcal{A}=(a_{i_{1}\cdots i_{m}})\in{\mathbb{R}}^{[m,n]}$ be a weakly symmetric nonnegative tensor. Then
the upper bound in Theorem \ref{th3} is smaller than those in Theorem 5 of \cite{zjxjia}, Theorem 4.5 of \cite{wg} and Corollary 4.5 of \cite{sys},
that is,
\begin{eqnarray*}
\Omega_{max}(\mathcal{A})&\leq&\max\limits_{i\in N}\min\limits_{j\in N, j\neq i}\frac{1}{2}\left\{
r_i^{\overline{\Delta}_j}(\mathcal{A})+\sqrt{(r_i^{\overline{\Delta}_j}(\mathcal{A}))^2+4r_i^{\Delta_j}(\mathcal{A})R_j(\mathcal{A})}\right\}\\
&\leq&\max\limits_{i\in N}\min\limits_{j\in N, j\neq i}\frac{1}{2}\left\{
R_i(\mathcal{A})-a_{ij\cdots j}+\sqrt{(R_i(\mathcal{A})-a_{ij\cdots j})^2+4a_{ij\cdots j}R_j(\mathcal{A})}\right\}\\
&\leq&\max\limits_{i\in N}R_i(\mathcal{A}).
\end{eqnarray*}
\end{thm}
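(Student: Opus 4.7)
The plan is to deduce all three inequalities from the set-theoretic chain $\Omega(\mathcal{A}) \subseteq \Psi(\mathcal{A}) \subseteq \mathcal{L}(\mathcal{A}) \subseteq \mathcal{K}(\mathcal{A})$ already established in Theorem \ref{th2}, exploiting the fact that each of the four quantities appearing in the statement of Theorem \ref{th4} is (the supremum of) $|z|$ over the corresponding localization set. Once this identification is in place, monotonicity of $\sup|z|$ under set inclusion gives the chain in the opposite direction, and the first inequality of the theorem is a bound on $\varrho(\mathcal{A})$ itself because, by Lemma 4.4 of \cite{wg}, the $Z$-spectral radius of a weakly symmetric nonnegative tensor is actually attained as a $Z$-eigenvalue and hence lies in $\Omega(\mathcal{A})$ by Theorem \ref{th1}.

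The second step is to rewrite the cited bounds as maxima over their associated sets. Solving the quadratic inequality that defines $\mathcal{L}_{i,j}(\mathcal{A})$ for $|z|$ yields Corollary 4.1 of \cite{wg}, identifying the third expression in the theorem with $\sup_{z\in\mathcal{L}(\mathcal{A})}|z|$; the same manoeuvre applied to $\Psi_{i,j}(\mathcal{A})$ produces Theorem 6 of \cite{zjxjia}, identifying the second expression with $\sup_{z\in\Psi(\mathcal{A})}|z|$; and the Ger\v{s}gorin-type estimate from Corollary 4.5 of \cite{sys} identifies $\sup_{z\in\mathcal{K}(\mathcal{A})}|z| = \max_{i\in N}R_i(\mathcal{A})$. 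Meanwhile, $\Omega_{max}(\mathcal{A})$ is by construction the supremum of $|z|$ over $\Omega(\mathcal{A})$: the $\hat{\Omega}_{i,j}$ branch contributes $\min\{r_i^{\overline{\Delta}_j}(\mathcal{A}),r_j^{\Delta_j}(\mathcal{A})\}$ via its strict inequalities, while the branch $\tilde{\Omega}_{i,j}(\mathcal{A})\cap\mathcal{K}_i(\mathcal{A})$ contributes $\min\{R_i(\mathcal{A}),\bar{\Omega}_{i,j}(\mathcal{A})\}$ after solving the defining quadratic for its positive root $\bar{\Omega}_{i,j}(\mathcal{A})$.

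With these identifications, the three desired inequalities follow at once from the three set inclusions in Theorem \ref{th2}. I expect the only subtle bookkeeping to be verifying that the $\hat{\Omega}$-branch never outstrips the $\Psi$-bound at the same indices; this is immediate from the elementary chain $\min\{r_i^{\overline{\Delta}_j}(\mathcal{A}),r_j^{\Delta_j}(\mathcal{A})\} \leq r_i^{\overline{\Delta}_j}(\mathcal{A}) \leq \tfrac{1}{2}\bigl(r_i^{\overline{\Delta}_j}(\mathcal{A})+\sqrt{(r_i^{\overline{\Delta}_j}(\mathcal{A}))^2+4r_i^{\Delta_j}(\mathcal{A})R_j(\mathcal{A})}\bigr)$, so no separate argument is needed to control $\hat{\Omega}_{max}(\mathcal{A})$ against the bound from $\Psi(\mathcal{A})$. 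Since everything else is quotation of existing results and trivial monotonicity, the author's assertion that the proof follows ``easily'' from Corollary 4.1 of \cite{wg}, Theorem 6 of \cite{zjxjia} and Theorem \ref{th2} is warranted.
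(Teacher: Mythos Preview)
Your approach is correct and is essentially the same as the paper's: the paper's entire proof is the one-line remark that the chain follows from Corollary~4.1 of \cite{wg}, Theorem~6 of \cite{zjxjia}, and Theorem~\ref{th2}, and you have simply unpacked that remark by identifying each numerical bound with the radius of the corresponding localization set and invoking monotonicity under the inclusions of Theorem~\ref{th2}. One small caveat: the claim that $\Omega_{max}(\mathcal{A})$ is \emph{exactly} $\sup_{z\in\Omega(\mathcal{A})}|z|$ is slightly delicate because the sets $\tilde{\Omega}_{i,j}(\mathcal{A})$ can be annuli rather than disks, so the intersection over $j$ need not attain the advertised radius; however, your argument still goes through because the proof of Theorem~\ref{th2} actually establishes the \emph{pairwise} inclusion $\tilde{\Omega}_{i,j}(\mathcal{A})\cap\mathcal{K}_i(\mathcal{A})\subseteq\Psi_{i,j}(\mathcal{A})$, and the point $|z|=\min\{R_i(\mathcal{A}),\bar{\Omega}_{i,j}(\mathcal{A})\}$ does lie in $\tilde{\Omega}_{i,j}(\mathcal{A})\cap\mathcal{K}_i(\mathcal{A})$, which gives the index-wise comparison you need.
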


Finally, we show that the upper bound in Theorem \ref{th3} is smaller than those in \cite{wg,zjxjia,sys,liwen,hejunjcaa,hejunspringerplus,lql} by the following example.

\begin{example}\label{eg2}
\setlength\arraycolsep{4pt}\emph{
Let $\mathcal{A}=(a_{ijk})\in {\mathbb{R}}^{[3,3]}$ be a weakly symmetric nonnegative tensor with entries defined as follows:
$$\mathcal{A}(:,:,1)=\left(
\begin{array}{ccc}
3&3&0\\
3&2&2.5\\
0.5&2.5&0\\
\end{array}\right),
~\mathcal{A}(:,:,2)=\left(
\begin{array}{ccc}
3&2&2\\
2&0&3\\
2.5&3&1\\
\end{array}\right),
~\mathcal{A}(:,:,3)=\left(
\begin{array}{ccc}
1&3&0\\
2.5&3&1\\
0&1&0\\
\end{array}\right).
$$
By Corollary 4.5 of \cite{sys} and Theorem 3.3 of \cite{liwen}, we both have $$\varrho(\mathcal{A})\leq 19.$$
By Theorem 3.5 of \cite{hejunjcaa}, we have $$\varrho(\mathcal{A})\leq 18.6788.$$
By Theorem 4.6 of \cite{wg}, we have $$\varrho(\mathcal{A})\leq 18.6603.$$
By Theorem 4.5 of \cite{wg} and Theorem 6 of \cite{hejunspringerplus}, we both have $$\varrho(\mathcal{A})\leq 18.5656.$$
By Theorem 4.7 of \cite{wg}, we have $$\varrho(\mathcal{A})\leq 18.3417.$$
By Theorem 2.9 of \cite{lql}, we have $$\varrho(\mathcal{A})\leq 17.2063.$$
By Theorem 5 of \cite{zjxjia}, we obtain $$\varrho(\mathcal{A})\leq 15.2580,$$
By Theorem \ref{th3}, we obtain $$\varrho(\mathcal{A})\leq 14.9410.$$
This example shows that the bound in Theorem \ref{th3} is the smallest.}
\end{example}

\begin{remark}\label{remark1}\emph{
From Example \ref{eg1}, it is not difficult to see that the upper bound in Theorem \ref{th3} could reach the true value of $\varrho(\mathcal{A})$ in some cases.}
\end{remark}


\section{Conclusion}\label{Sec5}

In this paper, we present a new $Z$-eigenvalue localization set $\Omega(\mathcal{A})$ and prove that this set is tighter than those in \cite{wg,zjxjia}.
As an application, we obtain a new upper bound $\Omega_{max}(\mathcal{A})$ for the $Z$-spectral radius of weakly symmetric nonnegative tensors,
and show that this bound is sharper than those in \cite{wg,zjxjia,sys,liwen,hejunjcaa,hejunspringerplus,lql} in some cases by a numerical example.

\section*{Acknowledgments}
This work is supported by National Natural Science Foundations of China (Grant No.11501141),
Foundation of Guizhou Science and Technology Department (Grant No.[2015]2073)
and Natural Science Programs of Education Department of Guizhou Province (Grant No.[2016]066).

\enddocument
\begin{thebibliography}{99}
\baselineskip12pt

\bibitem{qi2005}
L. Qi, Eigenvalues of a real supersymmetric tensor, {\em J. Symb. Comput.}, 40 (2005), pp. 1302-1324.

\bibitem{changkc1}
K.C. Chang, K.J. Pearson, T. Zhang, Some variational principles for $Z$-eigenvalues of nonnegative
tensors, {\em Linear Algebra Appl.}, 438 (2013), pp. 4166-4182.

\bibitem{lim}
L.-H. Lim, Singular values and eigenvalues of tensors: a variational approach, in {\em Proceedings of
the IEEE International Workshop on Computational Advances in Multi-Sensor Adaptive Processing (CAMSAP
05)}, 13-15 Dec. 2005, pp. 129-132. DOI: \href{https://doi.org/10.1109/CAMAP.2005.1574201}{10.1109/CAMAP.2005.1574201}.

\bibitem{wg}
G. Wang, G. Zhou, L. Caccetta, $Z$-eigenvalue inclusion theorems for tensors, {\em Discrete and Continuous Dynamical
Systems Series B}, 22 (1) (2017), pp. 187-198.

\bibitem{zjxjia}
J. Zhao, A new $Z$-eigenvalue localization set for tensors, {\em J. Inequal. Appl.}, to appear, April 4 2017,
11 pages. DOI: \href{https://dx.doi.org/10.1186/s13660-017-1363-6}{10.1186/s13660-017-1363-6}.

\bibitem{sys}
Y. Song, L. Qi, Spectral properties of positively homogeneous operators induced by higher order tensors,
{\em SIAM J. Matrix Anal. Appl.}, 34 (2013), pp. 1581-1595.

\bibitem{liwen}
W. Li, D. Liu, S.-W. Vong, $Z$-eigenpair bounds for an irreducible nonnegative tensor, {\em Linear Algebra
Appl.}, 483 (2015), pp. 182-199.

\bibitem{hejunjcaa}
J. He, Bounds for the largest eigenvalue of nonnegative tensors, {\em J. Comput. Anal. Appl.}, 20 (7)
(2016), pp. 1290-1301.

\bibitem{hejunspringerplus}
J. He, Y.-M. Liu, H. Ke, J.-K. Tian, X. Li, Bounds for the $Z$-spectral radius of nonnegative tensors,
{\em Springerplus}, 5 (2016), Article no. 1727, 8 pages. DOI:
\href{https://dx.doi.org/10.1186/s40064-016-3338-3}{10.1186/s40064-016-3338-3}.

\bibitem{lql}
Q. Liu, Y. Li, Bounds for the $Z$-eigenpair of general nonnegative tensors, {\em Open Math}, 14 (1) (2016), pp. 181-194.

\bibitem{hjaml}
J. He, T.-Z. Huang, Upper bound for the largest $Z$-eigenvalue of positive tensors, {\em Appl. Math. Lett.},
38 (2014), pp. 110-114.

\bibitem{lcq-lyt}
C. Li, Y. Li, An eigenvalue localizatiom set for tensor with applications to determine the positive
(semi-)definitenss of tensors, {\em Linear Multilinear Algebra}, 64 (4) (2016), pp. 587-601.

\bibitem{lcq-kx}
C. Li, Y. Li, X. Kong, New eigenvalue inclusion sets for tensors, {\em Numer. Linear Algebra Appl.}, 21 (2014), pp. 39-50.

\bibitem{lcq-zjj}
C. Li, J. Zhou, Y. Li, A new Brauer-type eigenvalue localization set for tensors, {\em Linear Multiliear
Algebra}, 64 (4) (2016), pp. 727-736.

\bibitem{lcq-cz}
C. Li, Z. Chen, Y. Li, A new eigenvalue inclusion set for tensors and its applications, {\em Linear Algebra Appl.},
481 (2015), pp. 36-53

\bibitem{ko}
E. Kofidis, P. A. Regalia, On the best rank-1 approximation of higher-order supersymmetric tensors, {\em SIAM
J. Matrix Anal. Appl.}, 23 (2002), pp. 863-884.

\bibitem{Gu2015}
X.-M. Gu, T.-Z. Huang, X.-L. Zhao, H.-B. Li, L. Li, Strang-type preconditioners for solving fractional diffusion equations
by boundary value methods, {\em J. Comput. Appl. Math.}, 277 (2015), pp. 73-86.
\end{thebibliography}
